\documentclass[11pt,a4paper]{amsart}

 \usepackage[utf8x]{inputenc}
\usepackage{latexsym}
\usepackage{color,graphicx,shortvrb}
\usepackage{amsmath, amssymb}
\usepackage{amsfonts}
\usepackage[colorlinks, bookmarks=true]{hyperref}

\newtheorem{theorem}{Theorem}[section]

\newtheorem{proposition}[theorem]{Proposition}
\newtheorem{corollary}[theorem]{Corollary}

\theoremstyle{definition}

        % s{\'Y}mbolo de fin de prueba

% \newcommand{\dec}{\mathbf{D}}

\setlength{\textwidth}{16cm}       % el defecto es 14cm
\setlength{\oddsidemargin}{0.25cm}   % elimina 1em de los m\'{a}rgenes laterales
\setlength{\evensidemargin}{0.25cm}  % elimina 1em de los m\'{a}rgenes laterales
\setlength{\topmargin}{1.2cm}     % elimina espacio del margen superior

\author{J. M. Almira}
\title{On Popoviciu-Ionescu functional equation}
%\thanks{$^*$ Corresponding author}

%\thanks{The ... author is partially supported by MCYT-FEDER Grant BFM2001-2871-C04-01.}
%\thanks{Research of the second author is supported by
%the Hungarian Scientific Research Fund (OTKA) grant K-111651.}
\begin{document}
\keywords{Functional equations, Exponential polynomials on Abelian groups, Montel type theorem}

%Ultrametric Banach spaces, Approximation theory, Lethargy theorems, p-adic transcendental numbers.}

\subjclass[2010]{39B22, 39A70, 39B52}

\address{Departamento de Matem\'{a}ticas, Universidad de Ja\'{e}n, E.P.S. Linares,  Campus Cient\'{\i}fico Tecnol\'{o}gico de Linares, Cintur\'{o}n Sur s/n, 23700 Linares, Spain}
\email{jmalmira@ujaen.es}
%\address{Instytut Matematyki, Uniwersytet  \'{S}laski,  Ul. Bankowa 14,  40-007 Katowice}
%\email{shulmanka@gmail.com, ekaterina.shulman@us.edu.pl}

% 41A65, A1A25, 11J61, 11J81, 11K60.}

\begin{abstract}
We study a functional equation first proposed by  T. Popoviciu \cite{P} in 1955. It was solved for the easiest case by Ionescu \cite{I} in 1956  and, for the general case, by  Ghiorcoiasiu and Roscau \cite{GR}  and Rad\'{o} \cite{R} in 1962. Our solution is based on a generalization of Rad\'{o}'s theorem to distributions in a higher dimensional setting  and, as far as we know, is different than existing solutions. Finally, we propose several related open problems.
\end{abstract}

\maketitle

\markboth{J. M. Almira}{Popoviciu-Ionescu functional equation}
%\section{Introduction}

\section{Motivation}
We study the continuous solutions of the functional equation
\begin{equation}\label{Po}
\det\left[\begin{array}{cccccc}
f(x) & f(x+h) &  \cdots & f(x+nh) \\
f(x+h) & f(x+2h) & \cdots & f(x+(n+1)h)\\
\vdots & \vdots & \ \ddots & \vdots \\
f(x+nh) & f(x+(n+1)h) &  \cdots &  f(x+2nh)\\
\end{array} \right] =0 \text{ for all } x,h\in\mathbb{R}.
\end{equation}
This equation was proposed by T. Popoviciu \cite{P} for functions $f:\mathbb{R}\to\mathbb{R}$ and was studied by several Romanian mathematicians in the 1960's \cite{C,I,GR,R,S}. In particular for the case of  continuous functions $f:\mathbb{R}\to\mathbb{R}$, Iounescu \cite{I} solved it for $n=1,2$ and, later on, as a result of the joint efforts of Ghiorcoiasiu and Roscau \cite{GR} and Rad\'{o} \cite{R}, it was solved for arbitrary $n$. Concretely, Ghiorcoiasiu and Roscau proved that, if $f:\mathbb{R}\to\mathbb{R}$ is a continuous solution of \eqref{Po}, then there exist $H>0$ and continuous  functions $a_k:(0,H)\to \mathbb{R}$, $k=0,\cdots,n$, such that $(a_0(h),\cdots,a_n(h))\neq (0,\cdots,0)$ for some $h\in (0,H)$ and
\begin{equation}\label{rado}
a_0(h)f(x)+a_1(h)f(x+h)+\cdots +a_n(h)f(x+nh)=0 \text{ for all } x\in\mathbb{R} \text{ and all } 0\leq h<H. 
\end{equation}
and Rad\'{o} proved that, for continuous functions $f:\mathbb{R}\to\mathbb{R}$, the equation \eqref{rado} characterizes the exponential polynomials which solve an ordinary homogenous linear differential equation of order $n$, with constant coefficients, $A_0f+A_1f'+\cdots+A_nf^{(n)}=0$. These equations are, furthermore, strongly connected to Levi-Civita's functional equation
\begin{equation}\label{LCn}
F(x+y)=\sum_{k=1}^n\varphi_k(x)\phi_k(y),
\end{equation}
which may be studied on a much more general setting of functions defined on groups or semigroups (see, for example, the monographs by Stetkaer  \cite{St} and Sz{\'e}kelyhidi \cite{Sz}, or the papers by Shulman \cite{E1,E2,E3,E4,E5,E6}). 

For functional equations like \eqref{rado}, which can be viewed as depending on a parameter $h$, it makes sense to ask about the minimal sets of parameters  
$\{h_i\}_{i\in I}$ with the property that, if $f$ solves the equation with $h=h_i$ for all $i\in I$, then it solves the equation for all $h$. These kind of results are named Montel-type theorems after the seminal papers by the French mathematician Montel, who studied the problem for Fr\'{e}chet's functional equation $\Delta_h^{n}f=0$ \cite{M1,M2,M3} (see also \cite{A1,A2,A3,A4}). 
 
In this note we use Anselone-Korevaar's theorem \cite{anselone} for a study of a Montel-type theorem connected to Rad\'{o}'s functional equation \eqref{rado}, which we re-formulate for distributions defined on $\mathbb{R}^d$, and use the corresponding result to give a new proof of the fact that continuous solutions $f:\mathbb{R}\to\mathbb{R}$ of \eqref{Po} are exponential polynomials.  Finally, we propose several open problems for the higher dimensional setting.

%To be more precise, we extend Ghiorcoiasiu and Roscau arguments to continuous functions of several variables and then we use the distributional formulation %of Anselone-Korevaar's theorem to prove a distributional version of  Rad\'{o}'s theorem.

\section{Main result}

\begin{theorem} \label{main} Let $h_1, h_2,\dots,h_s$  be such that they span a dense additive subgroup
of $\mathbb{R}^d$. Let $f$ be a distribution on $\mathbb{R}^d$ such that there exist natural
numbers $n_i$, $i=1,\dots,s$  satisfying
\begin{equation}\label{uno}
  \dim \mathbf{span} \{f, \tau_{h_i}(f),\cdots, (\tau_{h_i})^{n_i}(f)\} \leq n_i, \  \  i=1,\cdots,s.
\end{equation}
Then $f$ is, in distributional sense, a continuous exponential polynomial. In particular, $f$ is an ordinary function which is equal almost everywhere, in the Lebesgue measure, to an exponential polynomial.
\end{theorem}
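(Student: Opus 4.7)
The plan is to reduce to a finite-dimensional orbit situation and then invoke the distributional form of Anselone--Korevaar's theorem. First, from hypothesis \eqref{uno}, for each $i \in \{1, \ldots, s\}$ I would extract a nontrivial linear relation $\sum_{k=0}^{n_i} a_k^{(i)} \tau_{h_i}^k f = 0$, and after replacing $n_i$ by the minimal length of such a relation, both endpoints $a_0^{(i)}$ and $a_{n_i}^{(i)}$ must be nonzero: if $a_0^{(i)} = 0$, the invertibility of $\tau_{h_i}$ on $\mathcal{D}'(\R^d)$ (with inverse $\tau_{-h_i}$) would produce a strictly shorter relation, contradicting minimality. Consequently, the annihilating polynomial $P_i(y) := \sum_{k=0}^{n_i} a_k^{(i)} y^k$ has both nonzero constant term and nonzero leading term, and all translates $\tau_{h_i}^k f$ for $k \in \Z$ lie in the $n_i$-dimensional subspace $V_i := \spn\{f, \tau_{h_i}f, \ldots, \tau_{h_i}^{n_i-1}f\}$.

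Next I would combine these one-variable recurrences. Since the operators $\tau_{h_i}$ commute pairwise, the module generated by $f$ over the Laurent polynomial ring $\C[y_1^{\pm 1}, \ldots, y_s^{\pm 1}]$, with $y_i$ acting as $\tau_{h_i}$, is a quotient of $\C[y_1^{\pm 1}, \ldots, y_s^{\pm 1}] / (P_1(y_1), \ldots, P_s(y_s))$. Because each $P_i$ involves only $y_i$ and has nonzero constant term, this quotient has dimension at most $n_1 n_2 \cdots n_s$. Therefore the span $V$ of the orbit $\{\tau_x f : x \in H\}$, where $H := \Z h_1 + \cdots + \Z h_s$, is finite-dimensional.

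To upgrade from $H$-translates to all real translates I would exploit density: $H$ is dense in $\R^d$ by hypothesis, and the map $x \mapsto \tau_x f$ is continuous from $\R^d$ into $\mathcal{D}'(\R^d)$. Since $V$ is finite-dimensional and hence closed, this yields $\tau_x f \in V$ for every $x \in \R^d$, so the full translation orbit of $f$ spans a finite-dimensional subspace. At this point the distributional version of Anselone--Korevaar, announced in the introduction as the central technical tool, forces $f$ to coincide in the distributional sense with a continuous exponential polynomial; the smoothness of exponential polynomials then yields the ``ordinary function'' conclusion.

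The step I expect to be most delicate is the one hidden in this final appeal: adapting Anselone--Korevaar from continuous scalar functions to distributions on $\R^d$, thereby simultaneously supplying the regularity that turns $f$ into an honest function and identifying the finite-dimensional orbit as being spanned by exponential polynomials. The finite-dimensionality portion, by contrast, is essentially commutative algebra over Laurent polynomial rings once minimality of the recurrences has been arranged, and the density argument is a one-line consequence of continuity of translation in $\mathcal{D}'(\R^d)$.
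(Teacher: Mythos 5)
Your proposal is correct and follows essentially the same route as the paper: both use minimality to show that $\tau_{h_i}$ acts invertibly on the $n_i$-dimensional span (so negative powers stay inside it), combine the $s$ recurrences via commutativity to bound the span of the $\mathbb{Z}h_1+\cdots+\mathbb{Z}h_s$-orbit by $n_1\cdots n_s$, pass to all translates by density and continuity of translation, and conclude with Anselone--Korevaar. Your Laurent-polynomial-module phrasing is only a cosmetic repackaging of the paper's direct computation with the automorphism $\tau_{h_i}|_{W_i}$.
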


\begin{proof} If $f=0$ we are done. Thus, we impose $f\neq 0$.  Let us assume, without loss of generality,  that $n_i$ is the smallest natural number satisfying
\eqref{uno}, and let 
$$W_i=\mathbf{span} \{f, \tau_{h_i}(f),\cdots, (\tau_{h_i})^{n_i-1}(f)\}.$$
Obviously, $\dim(W_i)=n_i$ (otherwise $n_i$ would not be minimal). Furthermore,
\eqref{uno}  implies that $\tau_{h_i}(W_i)\subseteq W_i$. Hence $\tau_{h_i}$ defines an automorphism on $W_i$, since $\tau_{h_i}$ is always injective and $\dim W_i<\infty$.   Consequently, $(\tau_{h_i})^m(W_i)= W_i $ for all integral numbers $m$. In  particular, for each $m\in\mathbb{Z}$ there exist numbers $\{a_{i,m,k}\}_{k=0}^{n_i-1}$ such that
\[
\tau_{h_i}^m(f)=\sum_{k=0}^{n_i-1}a_{i,m,k}(\tau_{h_i})^k(f).
\]
It follows that 
\begin{eqnarray*}
&\ & \tau_{m_1h_1+\cdots+m_sh_s}(f) = (\tau_{h_1})^{m_1}(\tau_{h_2})^{m_2}\cdots (\tau_{h_s})^{m_s}(f)\\
&\ & \  \  = (\tau_{h_1})^{m_1}\cdots(\tau_{h_{s-1}})^{m_{s-1}}\left (\sum_{k_s=0}^{n_s-1}a_{s,m_s,k_s}(\tau_{h_s})^{k_s}(f)\right)\\
&\ & \  \  = \sum_{k_s=0}^{n_s-1} a_{s,m_s,k_s} (\tau_{h_1})^{m_1}\cdots(\tau_{h_{s-1}})^{m_{s-1}} (\tau_{h_s})^{k_s}(f)\\
&\ & \  \  = \sum_{k_s=0}^{n_s-1} a_{s,m_s,k_s} (\tau_{h_s})^{k_s} (\tau_{h_1})^{m_1}\cdots(\tau_{h_{s-1}})^{m_{s-1}} (f)\\
&\ & \  \  = \sum_{k_s=0}^{n_s-1} a_{s,m_s,k_s} (\tau_{h_s})^{k_s} (\tau_{h_1})^{m_1}\cdots(\tau_{h_{s-2}})^{m_{s-2}} \left (\sum_{k_{s-1}=0}^{n_{s-1}-1}a_{s-1,m_{s-1},k_{s-1}}(\tau_{h_{s-1}})^{k_{s-1}}(f)\right)\\
&\ & \  \  = \sum_{k_s=0}^{n_s-1} \sum_{k_{s-1}=0}^{n_{s-1}-1} a_{s,m_s,k_s} a_{s-1,m_{s-1},k_{s-1}}(\tau_{h_s})^{k_s} (\tau_{h_{s-1}})^{k_{s-1}} (\tau_{h_1})^{m_1}\cdots(\tau_{h_{s-2}})^{m_{s-2}}(f),
\end{eqnarray*}
and, repeating the argument $s$ times, we get
\[
\tau_{m_1h_1+\cdots+m_sh_s}(f) = \sum_{k_s=0}^{n_s-1} \sum_{k_{s-1}=0}^{n_{s-1}-1} \cdots \sum_{k_{1}=0}^{n_{1}-1} a_{s,m_s,k_s} a_{s-1,m_{s-1},k_{s-1}}\cdots a_{1,m_{1},k_{1}}(\tau_{h_s})^{k_s}  \cdots (\tau_{h_1})^{k_1}(f).
\]
In other words, if we consider the space
$$W=\mathbf{span} \{f, (\tau_{h_1})^{a_1}(\tau_{h_2})^{a_2}\cdots (\tau_{h_s})^{a_s}(f): 0\leq a_i<n_i, i=1,2,\cdots,s\}, $$
then
$$\tau_{m_1h_1+\cdots+m_sh_s}(f)\in W \text{ for all } (m_1,\cdots,m_s) \in \mathbb{Z}^s.$$
Hence every translation of $f$ belongs to $W$, since $h_1,\cdots,h_s$ span a dense additive subgroup of $\mathbb{R}^d$ and $W$ is finite
dimensional. The proof ends by applying Anselone-Korevaar's theorem.  
\end{proof}

\begin{proposition}
Every open subset $V$ of  $\mathbb{R}^d$ contains a finite set of vectors $\{h_0,\cdots,h_s\}$ which span a dense subgroup of $\mathbb{R}^d$.
\end{proposition}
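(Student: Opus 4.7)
The plan is to reduce to the case where the open set is a small ball centred at the origin, and then exhibit explicit generators of a dense subgroup using Kronecker's simultaneous approximation theorem.

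\textbf{Reduction.} First I would fix any $p\in V$ together with $\varepsilon>0$ such that $B(p,\varepsilon)\subset V$, and declare $h_0:=p$. If I can produce vectors $v_1,\ldots,v_s\in B(0,\varepsilon)$ whose $\mathbb{Z}$-span $\sum_{i=1}^{s}\mathbb{Z}v_i$ is dense in $\mathbb{R}^d$, then setting $h_i:=p+v_i\in B(p,\varepsilon)\subset V$ works, because the subgroup generated by $\{h_0,h_1,\ldots,h_s\}$ contains every difference $h_i-h_0=v_i$, and hence contains the already dense subgroup $\sum_{i}\mathbb{Z}v_i$. So it suffices to treat the centred case.

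\textbf{Construction of the small vectors.} Pick $\delta>0$ with $(d+1)\delta<\varepsilon$, and set $v_i:=\delta e_i$ for $i=1,\ldots,d$, where $e_1,\ldots,e_d$ is the canonical basis of $\mathbb{R}^d$. These generate the lattice $L:=\delta\mathbb{Z}^d$, which is discrete, so one more generator is needed. Choose $\alpha_1,\ldots,\alpha_d\in(0,1)$ such that $\{1,\alpha_1,\ldots,\alpha_d\}$ is linearly independent over $\mathbb{Q}$, and define
\[
v_{d+1}:=\delta(\alpha_1 e_1+\cdots+\alpha_d e_d),
\]
which still lies in $B(0,\varepsilon)$ by the choice of $\delta$. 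Kronecker's theorem on simultaneous Diophantine approximation says precisely that the orbit $\{k(\alpha_1,\ldots,\alpha_d)\bmod \mathbb{Z}^d:k\in\mathbb{Z}\}$ is dense in $\mathbb{R}^d/\mathbb{Z}^d$; rescaling by $\delta$ this means $L+\mathbb{Z}\,v_{d+1}$ is dense in $\mathbb{R}^d$. Since $L+\mathbb{Z}\,v_{d+1}\subset\sum_{i=1}^{d+1}\mathbb{Z}v_i$, taking $s=d+1$ produces the desired $d+2$ vectors $h_0,h_1,\ldots,h_{d+1}\in V$.

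\textbf{Where the work lies.} There is no serious obstacle: the substantive ingredient is the classical Kronecker theorem, and the remaining steps are routine (choosing $\delta$ small enough to keep all vectors in $V$, and picking a rationally independent tuple, which is generic). It is worth noting that the count $s\geq d+1$ is sharp, since any $d$ vectors of $\mathbb{R}^d$ generate a subgroup of rank at most $d$, which must be a discrete (lattice) subgroup of its linear span and thus cannot be dense in $\mathbb{R}^d$.
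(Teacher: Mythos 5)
Your proof is correct and follows essentially the same route as the paper: reduce to a small ball, produce a dense subgroup there via Kronecker's theorem applied to a scaled copy of $\mathbb{Z}^d+(\alpha_1,\dots,\alpha_d)\mathbb{Z}$, and handle a general open set by translating to $h_i=p+v_i$ and observing that the generated group contains the differences $h_i-h_0=v_i$. (Only your side remark on sharpness is slightly imprecise: $d$ real-linearly-dependent vectors need not generate a discrete subgroup of their linear span, e.g.\ $(1,0)$ and $(\sqrt{2},0)$ in $\mathbb{R}^2$, although the conclusion that $d$ vectors can never generate a dense subgroup of $\mathbb{R}^d$ is still valid.)
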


\begin{proof} A well known result by Kronecker states that $\mathbb{Z}^d+(\theta_1,\theta_2,\cdots,\theta_d)\mathbb{Z}$ is dense in $\mathbb{R}^d$ if and only if 
$\{1,\theta_1,\cdots,\theta_d\}$ is $\mathbb{Q}$-linearly independent (see \cite[Theorem 442, page 382]{HW}).  Of course, the same claim holds true for the subgroup $\frac{1}{N}\mathbb{Z}^d+(\theta_1,\theta_2,\cdots,\theta_d)\mathbb{Z}$ for every $N>0$. Hence, every open neighborhood of $(0,0,\cdots,0)$ contains a finite set of vectors $\{h_0,\cdots,h_s\}$ which span a dense subgroup of $\mathbb{R}^d$.

Let  $V$ be any open subset of  $\mathbb{R}^d$ and let  $x_0\in V$ and $\varepsilon >0$ be such that $x_0+B_0(\varepsilon)\subseteq V$.  Take $\{h_1,\cdots,h_s\}\subseteq B_0(\varepsilon) $  such that $h_1\mathbb{Z}+\cdots+h_s\mathbb{Z}$ is  dense in $\mathbb{R}^d$. Then $\{x_0, x_0+h_1,\cdots, x_0+h_s\}\subset V$  spans a dense additive subgroup   $\mathbb{R}^d$. 
\end{proof}

\begin{corollary}[Rad\'{o}'s theorem for higher dimensions] \label{rado_d} Assume that $f:\mathbb{R}^d\to\mathbb{R}$ is a continuous solution of 
\begin{equation}\label{radod}
a_0(h)f(x)+a_1(h)f(x+h)+\cdots +a_n(h)f(x+nh)=0 \text{ for all } x\in\mathbb{R}^d \text{ and all } h\in U 
\end{equation}
for a certain open set $U\subseteq \mathbb{R}^d$ and certain continuous functions $a_k:U\to\mathbb{R}$ such that $a=(a_0,\cdots,a_n)$ does not   vanish identically. Then $f$ is an exponential polynomial in $d$ variables. 
\end{corollary}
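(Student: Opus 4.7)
The plan is to deduce the corollary directly from Theorem \ref{main} together with the preceding Proposition; no extra machinery should be needed.

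First I would rewrite \eqref{radod} in operator form. Writing $\tau_h$ for translation by $h$ (so that $(\tau_h)^k f(x) = f(x+kh)$), the identity $a_0(h)f(x)+a_1(h)f(x+h)+\cdots+a_n(h)f(x+nh)=0$ says precisely that
$$\sum_{k=0}^n a_k(h)\,(\tau_h)^k f = 0 \qquad \text{for every } h\in U.$$
Whenever $a(h)=(a_0(h),\dots,a_n(h))\neq 0$, this is a nontrivial linear relation among the $n+1$ functions $f,\tau_h f,\dots,(\tau_h)^n f$, which therefore span a subspace of dimension at most $n$. This is exactly condition \eqref{uno} at $h$, with $n_i=n$.

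Second, I would produce enough such $h$'s to invoke Theorem \ref{main}. The set
$$U':=\{h\in U:a(h)\neq 0\}$$
is open, by continuity of the $a_k$, and nonempty, by the standing hypothesis that $a$ does not vanish identically. By the Proposition just proved, the open set $U'$ contains a finite collection $h_1,\dots,h_s$ spanning a dense additive subgroup of $\mathbb{R}^d$. For each of these $h_i$ the previous paragraph supplies condition \eqref{uno} with the same uniform choice $n_i=n$.

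Third, I would apply Theorem \ref{main} to the distribution $f$ (every continuous function is a distribution) with the data $(h_1,n_1),\dots,(h_s,n_s)$ as above. The conclusion is that $f$ coincides almost everywhere with a continuous exponential polynomial on $\mathbb{R}^d$; since $f$ itself is continuous, the two functions agree pointwise, and $f$ is an exponential polynomial in $d$ variables, as desired.

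I do not foresee a genuine obstacle: the corollary is essentially a packaging of the functional equation into the hypothesis of Theorem \ref{main}. The only mild subtlety is ensuring that the vectors witnessing the dense-subgroup condition can be chosen inside the possibly small set $U'$ where $a(h)\neq 0$, and this is precisely the content of the Proposition.
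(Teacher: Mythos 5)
Your argument is correct and follows essentially the same route as the paper's own proof: use continuity of $a$ to obtain a nonempty open set where $a(h)\neq 0$, pick $h_1,\dots,h_s$ there via the Proposition, note that the nontrivial relation forces $\dim\mathbf{span}\{f,\tau_{h_i}f,\dots,(\tau_{h_i})^n f\}\leq n$, and conclude by Theorem \ref{main} plus continuity of $f$. No discrepancies to report.
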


\begin{proof}
Let $h_0\in U$ be such that $a(h_0)\neq (0,\cdots,0)$. The continuity of $a$ implies that $a(h)\neq (0,\cdots,0)$ for all $h\in V$ for a certain open set $V\subseteq U$. Let us take $\{h_1,\cdots, h_s\}\subset V$ spanning a dense subgroup $h_1\mathbb{Z}+\cdots+h_s\mathbb{Z}$ of $\mathbb{R}^d$. Then   \eqref{radod} implies that 
\begin{equation}\label{dos}
  \dim \mathbf{span} \{f, \tau_{h_i}(f),\cdots, (\tau_{h_i})^{n}(f)\} \leq n, \  \  i=1,\cdots,s,
\end{equation}
and Theorem \ref{main} implies that $f$ is equal almost everywhere to an exponential polynomial.  Hence $f$ itself is an exponential polynomial, since $f$ is continuous.
\end{proof}

\begin{corollary}
Let $f:\mathbb{R}\to\mathbb{R}$ be a continuous function which solves \eqref{Po} for all $x,h\in\mathbb{R}$. Then $f$ is an exponential polynomial.
\end{corollary}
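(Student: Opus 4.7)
The plan is to reduce the Popoviciu determinantal equation \eqref{Po} to a Rado-type linear relation \eqref{rado} and then to invoke Corollary \ref{rado_d} with $d=1$, which is where the real work of this paper has already been done.

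First I would appeal to the Ghiorcoiasiu-Roscau reduction recalled in the Introduction: any continuous $f : \mathbb{R} \to \mathbb{R}$ satisfying \eqref{Po} admits $H > 0$ and continuous coefficients $a_0, \ldots, a_n : (0, H) \to \mathbb{R}$, with $(a_0, \ldots, a_n)$ not identically zero, such that \eqref{rado} holds for all $x \in \mathbb{R}$ and all $0 \leq h < H$. The crux of this step is to pass from the pointwise-in-$(x,h)$ linear dependence of the rows of the Hankel matrix to a linear relation whose coefficients depend on $h$ alone. The natural route is to let $n^* \leq n$ be the smallest integer for which the $(n^*+1) \times (n^*+1)$ Hankel minor of translates of $f$ vanishes identically in $(x, h)$; the case $n^* = 0$ forces $f \equiv 0$, while otherwise the principal $n^* \times n^*$ minor is continuous and not identically zero, hence nonzero on some open set. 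Cramer's rule then produces continuous $c_k(x, h)$ with $f(x + n^* h) = \sum_{k=0}^{n^* - 1} c_k(x, h) f(x + kh)$, and exploiting the full system of $n^* + 1$ row-relations, together with uniqueness from Cramer's rule, one argues that the $c_k$ are in fact functions of $h$ alone, yielding \eqref{rado}.

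Once \eqref{rado} is established on the open interval $(0, H)$, Corollary \ref{rado_d} applied with $d = 1$ and $U = (0, H)$ delivers directly that $f$ is an exponential polynomial, completing the proof. The main obstacle in the whole argument is precisely the reduction from \eqref{Po} to \eqref{rado}: extracting from the vanishing of the Hankel determinant a linear combination with coefficients depending only on $h$ is the delicate content of the Ghiorcoiasiu-Roscau argument. Once that reduction is granted, the machinery of Theorem \ref{main} (built on Anselone-Korevaar) finishes the job mechanically via its corollary.
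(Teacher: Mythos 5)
Your proposal follows exactly the paper's proof: invoke the Ghiorcoiasiu--Roscau reduction from \eqref{Po} to \eqref{rado} (which the paper simply cites as \cite[Theorem 5]{GR}, noting one may even take $a_n\equiv 1$), and then apply Corollary \ref{rado_d} with $d=1$. Your additional sketch of how the reduction itself might be proved is not needed for this corollary and, as you yourself note, glosses over the delicate point (that the coefficients can be taken independent of $x$), but since that step is legitimately outsourced to \cite{GR} the argument is complete and identical in structure to the paper's.
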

\begin{proof}  Ghiorcoiasiu and Roscau's theorem  \cite{GR} guarantees that $f$ solves \eqref{rado} for some continuous function $a(h)=(a_0(h),\cdots,a_n(h))$ such that   $a(h_0)\neq (0,\cdots,0)$ for some $h_0\in (0,H)$. Indeed, they prove that we can impose $a_n(h)=1$ for all $h\in (0,H)$ (see \cite[Theorem 5]{GR}). The result follows just applying Corollary \ref{rado_d} to $f$.
\end{proof}

It is natural to ask what are the continuous solutions of Popoviciu-Ionescu functional equation \eqref{Po} in the higher dimensional setting. This problem seems to be still open. In particular, the technique used by Ghiorcoiasiu and Roscau to reduce this equation to equation \eqref{rado} seems to fail in this context, since the proof of Theorem 3 of their paper \cite{GR} strongly depends on the fact that all arguments live in the very same line. Hence, without a new proof of a result of that kind, we can't use Corollary \ref{rado_d} in this context.  Does this mean that Popoviciu-Ionescu's functional equation admits non-exponential polynomial continuous solutions in the higher dimensional context? We do not believe it, but a proof is still far away from being at our hands. Consequently, we  state the following 
\vspace{0.5cm}

\noindent \textbf{Open Problem 1.}  Is it true  that all continuous solutions of the equation \begin{equation}\label{Po_d}
\det\left[\begin{array}{cccccc}
f(x) & f(x+h) &  \cdots & f(x+nh) \\
f(x+h) & f(x+2h) & \cdots & f(x+(n+1)h)\\
\vdots & \vdots & \ \ddots & \vdots \\
f(x+nh) & f(x+(n+1)h) &  \cdots &  f(x+2nh)\\
\end{array} \right] =0 \text{ for all } x,h\in\mathbb{R}^d.
\end{equation}
are exponential polynomials? 
\vspace{0.5cm}

By the way, we know that every exponential polynomial $f:\mathbb{R}^d\to\mathbb{C}$ solves  the equation \eqref{Po_d} for all $n$ large enough. Indeed, if $f$ is an exponential polynomial then $\tau(f)=\mathbf{span}\{\tau_hf:h\in\mathbb{R}^d\}$ is a finite dimensional space. Hence, if $n=\dim \tau(f)$ and $h\in\mathbb{R}^d$, there exist coefficients $a_k(h)$ such that   $a_0(h)f+a_1(h)\tau_h(f)+\cdots+a_n(h)\tau_{nh}(f)$ vanishes identically and, henceforth, $f$ solves  \eqref{Po_d}.

On the other hand, we can demonstrate the following (almost trivial) result:

\begin{proposition} \label{pro} Assume that $f:\mathbb{R}^d\to\mathbb{C}$ is a continuous solution of \eqref{Po_d}. Then $f$, restricted to any line $L$, defines an exponential polynomial.
\end{proposition}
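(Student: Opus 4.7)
The plan is to reduce Proposition \ref{pro} directly to the one-dimensional case of \eqref{Po}, which has just been established in the previous corollary. The key observation is that the Popoviciu-Ionescu determinant equation in $\mathbb{R}^d$ has a natural one-dimensional trace along any affine line.

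First I would parametrize an arbitrary line $L\subset\mathbb{R}^d$ as $L=\{x_0+tv:t\in\mathbb{R}\}$ for some $x_0\in\mathbb{R}^d$ and $v\in\mathbb{R}^d\setminus\{0\}$, and define the continuous function $g:\mathbb{R}\to\mathbb{C}$ by $g(t)=f(x_0+tv)$. The goal is then to show that $g$ is an exponential polynomial on $\mathbb{R}$.

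Next I would substitute $x=x_0+tv$ and $h=sv$, with $t,s\in\mathbb{R}$ arbitrary, into \eqref{Po_d}. Since $x+kh=x_0+(t+ks)v$ for every integer $k$, the entries of the matrix in \eqref{Po_d} become $f(x+(i+j)h)=g(t+(i+j)s)$, so the equation reduces to
\[
\det\bigl[g(t+(i+j)s)\bigr]_{i,j=0}^{n}=0\text{ for all }t,s\in\mathbb{R},
\]
which is exactly the one-dimensional Popoviciu-Ionescu equation \eqref{Po} for $g$. Since $f$ is continuous, so is $g$, and the last corollary in the previous section then immediately implies that $g$ is an exponential polynomial.

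There is no real obstacle here; the content of the statement is essentially the observation that restricting both the base point $x$ and the increment $h$ to lie in the same affine line produces a genuine one-dimensional instance of \eqref{Po}, so that the already-established one-variable result applies. This is why the author describes the proposition as almost trivial; the interesting (and still open) question is whether such line-by-line exponential polynomial behavior forces $f$ itself to be a genuine exponential polynomial in $d$ variables.
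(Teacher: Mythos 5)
Your proposal is correct and is essentially identical to the paper's own (very brief) proof: the paper also sets $F_{x_0,h_0}(t)=f(x_0+th_0)$, notes it is a continuous solution of \eqref{Po}, and invokes the one-dimensional result. You have simply spelled out the substitution $x=x_0+tv$, $h=sv$ that the paper leaves implicit.
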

\begin{proof} Given $x_0,h_0\in \mathbb{R}^d$, we set $F_{x_0,h_0}(t)=f(x_0+th_0)$. Then $F_{x_0,h_0}$ is a continuous solution of  \eqref{Po}, so that it is an exponential polynomial.  
\end{proof}

The result above motivates the statement of another question: 

\vspace{0.5cm}

\noindent \textbf{Open Problem 2.}  Assume that $f:\mathbb{R}^d\to\mathbb{R}$, restricted to any line $L$, defines an exponential polynomial,  which means that all functions $F_{x_0,h_0}(t)=f(x_0+th_0)$ satisfy the Levi-Civita functional equation \eqref{LCn} for some $n$ and some functions $\varphi_k,\phi_k$, $k=1,\cdots,n$.  Is it true, then, that $f$ is itself an exponential polynomial? The problem can be stated either for arbitrary functions $f$, in which case being an exponential polynomial should be understood as being a solution of Levi-Civita functional equation in $\mathbb{R}^d$, or for functions $f$ satisfying some restriction, like being continuous, in which case the exponential polynomials are just finite linear combinations of exponential monomials $x^{\alpha}e^{\langle x,\lambda \rangle}$, with  $\alpha\in\mathbb{N}^d$ and $\lambda\in\mathbb{C}^d$.  

\vspace{0.5cm}

For polynomial functions, a  result of this type was demonstrated by Prager and Schwaiger in 2009 \cite[Theorem 14]{PS}. Concretely, they proved that  if  $K$ is a field and $ f : K^d\to K $ is an ordinary algebraic polynomial function separately in each variable (which means that for any $1\leq k\leq d$ and any point $(a_1,\cdots,a_{k-1},a_{k+1},\cdots,a_d)\in K^{d-1}$, the function $f(a_1,\cdots,a_{k-1},x_k,a_{k+1},\cdots,a_d)$ is an ordinary algebraic polynomial in $x_k$) then $f$ is an ordinary algebraic polynomial function in $d$ variables provided that $K$ is finite or uncountable. Furthermore, for every countable infinite field $K$ there exists a function $f:K^2\to K$ which is an ordinary algebraic polynomial function separately in each variable and is not a generalized polynomial in both variables jointly. Of course, the result does not assume continuity of $f$ nor any common upper bound for the degrees of the polynomials $f(a_1,\cdots,a_{k-1},x_k,a_{k+1},\cdots,a_d)$. It turns out that a similar  result can be demonstrated for trigonometric polynomials:
\begin{theorem} \label{PST} Let  $f:\mathbb{R}^d\to\mathbb{C}$  be a function satisfying  that there exist  $T_1,\cdots,T_d>0$ such that for any $1\leq k\leq d$ and any point 
$(a_1,\cdots,a_{k-1},a_{k+1},\cdots,a_d)\in \mathbb{R}^{d-1}$, the function $f(a_1,\cdots,a_{k-1},x_k,a_{k+1},\cdots,a_d)$ is a $T_k$-periodic trigonometric polynomial in $x_k$. Then $f=P(e^{\frac{2\pi ix_1}{T_1}},e^{-\frac{2\pi ix_1}{T_1}},\cdots,e^{\frac{2\pi ix_d}{T_d}},e^{-\frac{2\pi ix_d}{T_d}})$ for a certain ordinary algebraic polynomial $P$ (i.e., $f$ is a trigonometric polynomial of several variables).
\end{theorem}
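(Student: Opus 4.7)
My plan is to proceed by induction on $d$, the base case $d=1$ being trivial. Assume the result has been established in $d-1$ variables. Write $\mathbf{x}'=(x_2,\dots,x_d)$ and let $\mathcal{T}_N$ denote the space of $T_1$-periodic trigonometric polynomials of degree $\leq N$. The strategy has three steps: (i) use Baire category to produce an $N_0$ and a non-empty open set $U\subseteq \mathbb{R}^{d-1}$ with $f(\cdot,\mathbf{x}')\in\mathcal{T}_{N_0}$ for all $\mathbf{x}'\in U$; (ii) extract Fourier coefficients by a fixed linear formula that, combined with the inductive hypothesis, produces a joint trigonometric polynomial $F$ on $\mathbb{R}^d$ agreeing with $f$ on $\mathbb{R}\times U$; (iii) promote this agreement to all of $\mathbb{R}^d$ by real-analyticity.

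For step (i), fix $h_0$ with $h_0/T_1$ irrational and let $\beta_0(h_0),\dots,\beta_{2N+1}(h_0)$ be determined by $\sum_j \beta_j z^j=\prod_{|k|\leq N}(z-e^{2\pi i k h_0/T_1})$. Expanding any trigonometric polynomial $g$ in its finite Fourier series and using that the $2N+1$ eigenvalues of $\tau_{h_0}$ corresponding to modes $|k|\leq N$ are distinct from all other eigenvalues (by irrationality of $h_0/T_1$), one sees that $\sum_{j=0}^{2N+1}\beta_j(h_0)\,g(\cdot+jh_0)\equiv 0$ if and only if $g\in\mathcal{T}_N$. Put
\[
G_{x,N}(\mathbf{x}'):=\sum_{j=0}^{2N+1}\beta_j(h_0)\,f(x+jh_0,\mathbf{x}'),\qquad A_N:=\bigcap_{x\in\mathbb{R}}\{\mathbf{x}': G_{x,N}(\mathbf{x}')=0\},
\]
so that $A_N=\{\mathbf{x}': f(\cdot,\mathbf{x}')\in\mathcal{T}_N\}$ and $\bigcup_N A_N=\mathbb{R}^{d-1}$. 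For each fixed $x$, $G_{x,N}$ is a linear combination of slices $f(x+jh_0,\cdot)$ and is therefore separately a $T_k$-periodic trigonometric polynomial in each $x_k$; the inductive hypothesis then identifies $G_{x,N}$ as a joint trigonometric polynomial on $\mathbb{R}^{d-1}$, hence real-analytic, so $\{G_{x,N}=0\}$ is closed. Thus $A_N$ is closed, and Baire's theorem on the complete metric space $\mathbb{R}^{d-1}$ yields some $N_0$ with $A_{N_0}$ having non-empty interior $U$.

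For step (ii), pick $2N_0+1$ distinct nodes $\xi_0,\dots,\xi_{2N_0}\in[0,T_1)$ and fix the (universal) inverse-DFT coefficients $\mu_{n,j}$ for which every $g\in\mathcal{T}_{N_0}$ has $n$-th Fourier coefficient equal to $\sum_j\mu_{n,j}\,g(\xi_j)$. Set $c_n(\mathbf{x}'):=\sum_j\mu_{n,j}\,f(\xi_j,\mathbf{x}')$ on all of $\mathbb{R}^{d-1}$. Each $c_n$ is separately a $T_k$-periodic trigonometric polynomial in each variable and so, by the inductive hypothesis, is a joint trigonometric polynomial on $\mathbb{R}^{d-1}$. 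Define
\[
F(x_1,\mathbf{x}'):=\sum_{|n|\leq N_0}c_n(\mathbf{x}')\,e^{2\pi i n x_1/T_1},
\]
a joint trigonometric polynomial on $\mathbb{R}^d$ which, by the choice of nodes and by $f(\cdot,\mathbf{x}')\in\mathcal{T}_{N_0}$ for $\mathbf{x}'\in U$, agrees with $f$ on $\mathbb{R}\times U$. For step (iii), fix $x_1$ and apply the inductive hypothesis to the slice $f(x_1,\cdot)$ (separately $T_k$-periodic trigonometric polynomial in each $x_k$, $k\geq 2$) to conclude that $f(x_1,\cdot)-F(x_1,\cdot)$ is a joint trigonometric polynomial on $\mathbb{R}^{d-1}$, hence real-analytic in $\mathbf{x}'$; since it vanishes on the non-empty open set $U$, it vanishes identically, and therefore $f\equiv F$ on $\mathbb{R}^d$. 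I expect the main obstacle to be the closedness of $A_N$: the only regularity of $f$ in $\mathbf{x}'$ directly available from the hypotheses is separate trigonometric-polynomiality, so one genuinely needs the inductive hypothesis to upgrade $G_{x,N}$ to a real-analytic function whose zero set is closed, and this is what makes Baire category applicable.
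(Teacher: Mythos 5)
Your proof is correct, but it follows a genuinely different route from the paper's. Both arguments are inductions on $d$ built around the same core difficulty --- finding a region where the slice degrees are uniformly bounded, interpolating to manufacture candidate coefficient functions, and then extending the resulting identity --- but you resolve each stage differently. The paper slices along the \emph{last} variable: for each $b_d$ the $(d-1)$-variable slice is, by induction, a joint trigonometric polynomial with finitely supported coefficients $A_\alpha(b_d)$, and a pigeonhole argument over the uncountable set $[0,T_d)$ produces a single $m$ for which the level set $F_m$ (where the support lies in $\{-m,\dots,m\}^{d-1}$) is uncountable, hence infinite; interpolation at a fixed grid $Q$ then yields coefficients $a_\alpha(x_d)$ that are one-variable trigonometric polynomials, and the final identification $f=g$ uses only the one-variable fact that a trigonometric polynomial vanishing at infinitely many points of a period vanishes identically. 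You instead slice along the \emph{first} variable and replace the pigeonhole step by a Baire category argument on $\mathbb{R}^{d-1}$, which forces you to prove that the sets $A_N$ are closed; this is where your argument leans hardest on the inductive hypothesis, since you must upgrade the annihilator $G_{x,N}$ to a joint (hence real-analytic) trigonometric polynomial before its zero set is known to be closed. Your payoff is an open set $U$ rather than merely an infinite set, and your final extension step correspondingly uses the identity theorem for real-analytic functions in $d-1$ variables rather than the one-variable zero-counting argument. The paper's version is somewhat more elementary (no Baire category, no multivariable analyticity, and the infinite set $F_m$ suffices because the last coordinate is one-dimensional), and it tracks the Prager--Schwaiger proof it is modeled on; yours is self-contained and equally valid, at the cost of the extra regularity bookkeeping needed to make $A_N$ closed.
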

\begin{proof} The proof follows the very same steps of the demonstration of \cite[Theorem 14]{PS}. We include it here for the sake of completeness. We proceed by induction on the dimension $d$. For $d=1$ there is nothing to prove. Let us assume the result holds for $d-1$ variables and let us now assume that our function depends on $d$ variables. For each $b_d\in [0,T_d)$, the induction hypothesis confirms us that 
\[
f(b_1,\cdots,b_{d-1},b_d)=\sum_{\alpha\in\mathbb{Z}^{d-1}}A_{\alpha}(b_d) e^{\alpha_1\frac{2\pi ib_1}{T_1}}\cdots e^{\alpha_{d-1}\frac{2\pi ib_{d-1}}{T_d}}
\]
for certain functions $A_{\alpha}$ with the property that, for each $\xi \in [0,T_d)$, $I(\xi)=\#\{\alpha: A_{\alpha}(\xi)\neq 0\}<\infty$. Given $p\in\mathbb{N}$ we define $F_p=\{\xi\in[0,T_d):I(\xi)\subseteq \{-p,-p+1,\cdots,-1,0,1,2,\cdots, p\}^{d-1}\}$. Obviously, $[0,T_d)=\bigcup_pF_p$ and $[0,T_d)$ is uncountable. Hence $F_m$ is uncountable for some  $m$.  In particular, $F_m$ is infinite and 
\[
f(b_1,\cdots,b_{d-1},b_d)=\sum_{\alpha\in\{-m,\cdots,-1,0,1,\cdots,m\}^{d-1}}A_{\alpha}(b_d) e^{\alpha_1\frac{2\pi ib_1}{T_1}}\cdots e^{\alpha_{d-1}\frac{2\pi ib_{d-1}}{T_d}}
\]
for all $(b_1,\cdots,b_{d-1})\in [0,T_1)\times [0,T_2)\times \cdots \times[0,T_{d-1})$ and for all $b_d\in F_m$. 
We can choose some sets of points $Q_k\subseteq [0,T_k)$, $k=1,\cdots,d-1$, with cardinality $\#Q_k=2m+1$ such that, if we set $Q=Q_1\times Q_2\times\cdots Q_{d-1}$,  the system of linear equations (in the unknowns $u_{\alpha}$)
\[
f(y_1,\cdots,y_{d-1},b_d)=\sum_{\alpha\in\{-m,\cdots,-1,0,1,\cdots,m\}^{d-1}}u_{\alpha}e^{\alpha_1\frac{2\pi iy_1}{T_1}}\cdots e^{\alpha_{d-1}\frac{2\pi iy_{d-1}}{T_d}} ,\ \ (y_1,\cdots,y_{d-1})\in Q
\]
admits, for each $b_d\in F_m$, a unique solution $$u_{\alpha}(b_d)=\sum_{(y_1,\cdots,y_{d-1})\in Q} c_{\alpha}(y_1,\cdots,y_{d-1})f(y_1,\cdots,y_{d-1},b_d).$$ 
Hence 
\[
a_{\alpha}(x_d)= \sum_{(y_1,\cdots,y_{d-1})\in Q} c_{\alpha}(y_1,\cdots,y_{d-1})f(y_1,\cdots,y_{d-1},x_d)
\]
defines an ordinary algebraic polynomial in $\{e^{\frac{2\pi i x_d}{T_d}},e^{-\frac{2\pi i x_d}{T_d}}\}$ (since this is the case for all functions $f(y_1,\cdots,y_{d-1},x_d)$) and satisfies the identities  $a_{\alpha}(b_d)=A_{\alpha}(b_d)$ for all $b_d\in F_m$ and all $\alpha\in \{-m,\cdots,-1,0,1,\cdots,m\}^{d-1}$.  Let us now consider the trigonometric polynomial 
\[
g(x_1,\cdots,x_{d-1},x_d)=\sum_{\alpha\in\{-m,\cdots,-1,0,1,\cdots,m\}^{d-1}}a_{\alpha}(x_d) e^{\alpha_1\frac{2\pi ix_1}{T_1}}\cdots e^{\alpha_{d-1}\frac{2\pi ix_{d-1}}{T_d}}
\]
Obviously, $g$ can be written as 
\[
g(x_1,\cdots,x_{d-1},x_d)=\sum_{k=-r}^rg_k(e^{\frac{2\pi i x_1}{T_1}},e^{-\frac{2\pi i x_1}{T_1}}, \cdots, e^{\frac{2\pi i x_{d-1}}{T_{d-1}}},e^{-\frac{2\pi i x_{d-1}}{T_{d-1}}}) e^{k\frac{2\pi i x_d}{T_d}} 
\]
for certain ordinary algebraic polynomials $g_k(X_1,\cdots,X_{2d-2})$ and certain $r>0$. On the other hand, there exist functions $f_k$ such that 
\[
f(b_1,\cdots,b_{d-1},b_d)=\sum_{k=-\infty}^{\infty}f_k(e^{\frac{2\pi i x_1}{T_1}},e^{-\frac{2\pi i x_1}{T_1}}, \cdots, e^{\frac{2\pi i x_{d-1}}{T_{d-1}}},e^{-\frac{2\pi i x_{d-1}}{T_{d-1}}}) e^{k\frac{2\pi i b_d}{T_d}}, 
\]
where, for each $(b_1,\cdots,b_{d-1})\in [0,T_1)\times \cdots\times [0,T_{d-1})$, the number of $k$'s such that $$f_k(e^{\frac{2\pi i x_1}{T_1}},e^{-\frac{2\pi i x_1}{T_1}}, \cdots, e^{\frac{2\pi i x_{d-1}}{T_{d-1}}},e^{-\frac{2\pi i x_{d-1}}{T_{d-1}}}) \neq 0$$ is finite. 
Now, given $(b_1,\cdots,b_{d-1})$, the equality $f(b_1,\cdots,b_{d-1},b_d)=g(b_1,\cdots,b_{d-1},b_d)$ holds true for infinitely many points $b_d\in F_m$. This proves that $$f_k(e^{\frac{2\pi i x_1}{T_1}},e^{-\frac{2\pi i x_1}{T_1}}, \cdots, e^{\frac{2\pi i x_{d-1}}{T_{d-1}}},e^{-\frac{2\pi i x_{d-1}}{T_{d-1}}}) =g_k(e^{\frac{2\pi i x_1}{T_1}},e^{-\frac{2\pi i x_1}{T_1}}, \cdots, e^{\frac{2\pi i x_{d-1}}{T_{d-1}}},e^{-\frac{2\pi i x_{d-1}}{T_{d-1}}}) $$ for  $|k|\leq r$ and $$f_k(e^{\frac{2\pi i x_1}{T_1}},e^{-\frac{2\pi i x_1}{T_1}}, \cdots, e^{\frac{2\pi i x_{d-1}}{T_{d-1}}},e^{-\frac{2\pi i x_{d-1}}{T_{d-1}}}) =0 $$ for $|k|>r$. Henceforth, $f=g$, which ends the proof.  
\end{proof}

Now we can state the following result, which is just a first step for the study of continuous solutions of  \eqref{Po_d}. 

\begin{theorem} Assume that $f:\mathbb{R}^d\to\mathbb{C}$ is a continuous solution of \eqref{Po_d}.  If  there exist  $T_1,\cdots,T_d>0$ such that for any $1\leq k\leq d$ and any point  $(a_1,\cdots,a_{k-1},a_{k+1},\cdots,a_d)\in \mathbb{R}^{d-1}$, the function $f(a_1,\cdots,a_{k-1},x_k,a_{k+1},\cdots,a_d)$ is $T_k$-periodic, then  $$f=P(e^{\frac{2\pi ix_1}{T_1}},e^{-\frac{2\pi ix_1}{T_1}},\cdots,e^{\frac{2\pi ix_d}{T_d}},e^{-\frac{2\pi ix_d}{T_d}})$$ for a certain ordinary algebraic polynomial $P$.
\end{theorem}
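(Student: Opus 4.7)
The plan is to reduce the statement to Theorem \ref{PST} by means of Proposition \ref{pro}. Since $f$ is a continuous solution of \eqref{Po_d}, Proposition \ref{pro} says that its restriction to any affine line is an exponential polynomial. Applied to lines parallel to the $k$-th coordinate axis, this yields that for each fixed $(a_1,\ldots,a_{k-1},a_{k+1},\ldots,a_d)\in\mathbb{R}^{d-1}$ the function $x_k\mapsto f(a_1,\ldots,a_{k-1},x_k,a_{k+1},\ldots,a_d)$ is a continuous exponential polynomial in one variable. By hypothesis it is also $T_k$-periodic.

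The second step is a one-variable lemma: a continuous $T$-periodic exponential polynomial $F(t)=\sum_{j=1}^{m} p_j(t)\,e^{\lambda_j t}$, with distinct $\lambda_j\in\mathbb{C}$ and nonzero polynomial coefficients $p_j$, is necessarily of the form $\sum_{k} c_k\, e^{2\pi i k t/T}$, i.e.\ a polynomial in $e^{2\pi i t/T}$ and $e^{-2\pi i t/T}$. Indeed, the identity $F(t+T)=F(t)$ together with the linear independence of the system $\{t^\ell e^{\lambda_j t}\}$ forces $p_j(t+T)\,e^{\lambda_j T}=p_j(t)$ for every $j$; comparing leading coefficients gives $e^{\lambda_j T}=1$, so $\lambda_j\in\frac{2\pi i}{T}\mathbb{Z}$, and then the relation $p_j(t+T)=p_j(t)$ forces each $p_j$ to be constant.

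Applying this lemma (with $T=T_k$) to every axis restriction of $f$, we conclude that $f$ fulfils precisely the hypotheses of Theorem \ref{PST}, which then produces the required ordinary algebraic polynomial $P$ such that $f=P(e^{2\pi i x_1/T_1},e^{-2\pi i x_1/T_1},\ldots,e^{2\pi i x_d/T_d},e^{-2\pi i x_d/T_d})$. I do not foresee a substantive obstacle: beyond Proposition \ref{pro} and Theorem \ref{PST}, already established in the paper, the only new ingredient is the elementary periodic-exponential-polynomial lemma above, which is a standard consequence of the linear independence of exponential monomials, and the bulk of the argument consists in verifying hypotheses rather than proving anything new.
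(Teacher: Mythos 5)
Your proposal is correct and follows essentially the same route as the paper: restrict to coordinate axes via Proposition \ref{pro}, use periodicity to identify the restrictions as trigonometric polynomials, and invoke Theorem \ref{PST}. The only difference is that you spell out, as an explicit lemma, why a $T$-periodic exponential polynomial must be a polynomial in $e^{\pm 2\pi i t/T}$ --- a step the paper leaves implicit --- and your argument for it is sound.
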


\begin{proof} Given $k\in \{1,\cdots,d\}$ and  $(a_1,\cdots,a_{k-1},a_{k+1},\cdots,a_d)\in\mathbb{R}^{d-1}$,  Proposition \ref{pro}, and our assumptions on the periodicity of $f(a_1,\cdots,a_{k-1},x_k,a_{k+1},\cdots,a_d)$, guarantee that $$f(a_1,\cdots,a_{k-1},x_k,a_{k+1},\cdots,a_d)=Q(e^{\frac{2\pi ix_k}{T_k}},e^{-\frac{2\pi ix_k}{T_k}} )$$ for a certain ordinary algebraic polynomial $Q$. Now we can apply Theorem \ref{PST}.
\end{proof}

As a particular case of Open Problem 2, we state the following
\vspace{0.5cm}

\noindent \textbf{Open Problem 3. } Is it true that all bounded continuous solutions of \eqref{Po_d} are bounded exponential polynomials in $\mathbb{R}^d$, which is the same as saying that they are finite sums  of the form $$\sum_{k=1}^mP_k(e^{\frac{2\pi ix_1}{T_{k,1}}},e^{-\frac{2\pi ix_1}{T_{k,1}}},\cdots,e^{\frac{2\pi ix_d}{T_{k,d}}},e^{-\frac{2\pi ix_d}{T_{k,d}}}),$$
with each $P_i$ being an ordinary algebraic polynomial in $2d$ variables? In other words, we wonder if they are finite linear combinations of functions of the form $e^{\langle \lambda i, \alpha x\rangle}$, with $\lambda\in\mathbb{R}^d$, $\alpha\in\mathbb{Z}^d$, $x=(x_1,\cdots,x_d)$. Of course, as we have already observed, all these functions are  solutions of  \eqref{Po_d} for some $n\in\mathbb{N}$, since they solve a Levi-Civita functional equation.  

\section{Acknowledgement} The author is deeply thankful to the anonymous referees, since they helped to improve the readability of this paper. One of them suggested the correct formulation (and proof) of Theorems 2.6 and 2.7, and Open Problem 3.   
\bibliographystyle{amsplain}

%%%  ==============================================================

\bigskip

%Phone: (36)+ 52-512900/22816

%Fax: (36)+ 52-512728}

\end{document}